\newtheorem{theorem}{Theorem}[section]
\newtheorem{corollary}[theorem]{Corollary}
\theoremstyle{definition}
\theoremstyle{remark}
\newtheorem{remark}[theorem]{Remark}
\numberwithin{equation}{section}
\newcommand{\rd}{{\mathbb R^d}}
\newcommand{\rr}{{\mathbb R}}
\def\R{{\mathbb R}}
\def\C{{\mathbb C}}
\def\E{{\mathbb E}}
\def\D{{\mathbb D}}
\begin{document}

\title{\bf Strong analytic solutions of fractional Cauchy problems}\thanks{Mathematics Subject Classification (2010); Primary: 35R11, 35C15, 35S05. Secondary: 47G30, 60K99.}

\author{Jebessa B. Mijena}
\address{Jebessa B. Mijena, Department of Mathematics and Statistics, 221 Parker Hall, Auburn University, Auburn, Alabama 36849, USA}
\email{jbm0018@tigermail.auburn.edu}

\author{Erkan Nane}
\address{Erkan Nane, Department of Mathematics and Statistics, 221 Parker Hall, Auburn University, Auburn, Alabama 36849, USA}
\email{nane@auburn.edu}
\urladdr{http://www.auburn.edu/$\sim$ezn0001/}

\begin{abstract}
Fractional derivatives can be used to model time delays in a
diffusion process. When the order of the fractional derivative is
distributed over the unit interval, it is useful for  modeling a
mixture of delay sources. In some special cases distributed order derivative can be used to model ultra-slow diffusion.
We extend the results of Baeumer and Meerschaert \cite{fracCauchy} in the single order fractional derivative case to distributed order   fractional derivative case. In particular, we develop the
strong analytic solutions of  distributed order fractional Cauchy problems.
\end{abstract}

\keywords{Distributed-order Cauchy problems, Caputo fractional derivative, Reimann-Liouville fractional derivative, Strongly analytic solution}

\maketitle
\section{Introduction}
This paper develops strong analytic solution for  distributed order fractional Cauchy problems. Cauchy problems $\frac{\partial u}{\partial t}
=Lu$ model diffusion processes and have appeared as an essential tool for the study of dynamics of various complex stochastic
processes arising in anomalous diffusion in physics \cite{Metzler, Zaslavsky}, finance \cite{Gorenflo}, hydrology \cite{Benson}, and cell biology \cite{Saxton}. Complexity includes phenomena
such as the presence of weak or strong correlations, different sub-or super-diffusive modes, and jump effects. For example,
experimental studies of the motion of macromolecules in a cell membrane show apparent subdiffusive motion with several
simultaneous diffusive modes (see \cite{Saxton}). The simplest case $L=\Delta=\sum_{j}\partial^{2}u/\partial x^{2}_{j}$ governs
a Brownian $B(t)$ with density $u(t,x)$, for which the square root scaling $u(t,x) = t^{-1/2}u(1,t^{-1/2}x)$ pertains
\cite{Einstien}.

The fractional Cauchy problem $\partial^{\beta}u/\partial t^{\beta}=Lu$ with $0<\beta<1$ models anomalous sub-diffusion, in which
a cloud of particles spreads slower than the square root of time. When $L=\Delta$, the solution $u(t,x)$ is the density of a
time-changed Brownian motion $B(E(t))$, where the non-Markovian time change $E(t)=\inf\{\tau>0;D(\tau)>t\}$ is the inverse,
or first passage time of a stable subordinator $D(t)$ with index $\beta$. The scaling $D(ct)=c^{1/\beta}D(t)$ in law implies
$E(ct)=c^{\beta}E(t)$ in law for the inverse process, so that $u(t,x)=t^{-\beta/2}u(1,t^{-\beta/2}x).$

The process $B(E(t))$ is the long-time scaling limit of a random walk \cite{Zsolution,limitCTRW}, when the random
waiting times between
jumps belong to the $\beta$-stable domain of attraction.  Roughly speaking, a power-law distribution
of waiting times leads to a fractional time derivative in the governing equation.  Recently, Barlow and
\u Cern\'y \cite{barlow-cerny} obtained $B(E(t))$ as
the scaling limit of a random walk in a random environment. More generally, for a uniformly elliptic operator
 $L$ on a bounded domain $D\subset \rd$, under suitable technical conditions and assuming Dirichlet boundary
  conditions, the diffusion equation $\partial u/\partial t=Lu$ governs a Markov process $Y(t)$ killed at the
  boundary, and the corresponding fractional diffusion equation $\partial^\beta u/\partial t^\beta=Lu$ governs the time-changed
  process
$Y(E(t))$ \cite{m-n-v-aop}.

In some applications, waiting times between particle jumps evolve according to a more complicated process, which
cannot
 be adequately described by a single power law.  A mixture of power laws leads to a distributed-order fractional
derivative in time \cite{chechkin-et-al, mainardi-1, mainardi-2, mainardi-3, M-S-ultra, naber}.  An
important application of distributed-order diffusions
 is to model ultraslow diffusion where a plume of particles spreads at a logarithmic rate \cite{M-S-ultra, Sinai}.  This paper
 considers the distributed-order time-fractional diffusion equations with the generator $L$ of a uniformly bounded and strongly continuous semigroup in a Banach space.
 Hahn et al.\ \cite{h-k-umarov} discussed the solutions of such equations on $\rd$, and the connections with certain
 subordinated processes.  Kochubei \cite{koch3} proved strong solutions on $\rd$ for the case $L=\Delta$.
  Luchko \cite{Luckho} proved the uniqueness and continuous dependence on initial conditions on bounded domains.
  Meerschaert et al. \cite{m-n-v-jmaa} established the strong solutions of distributed order fractional Cauchy problems in bounded domains with Dirichlet boundary conditions.

When  $L$  is the generator of   a uniformly bounded  and continuous semigroup on    a Banach spaces,  Baeumer and Meerschaert  \cite{fracCauchy} showed that the solution of $$\partial^{\beta}u/\partial t^{\beta}=Lu$$ is analytic in a sectorial region.
In this paper, we
extend the results of Baeumer and Meerschaert \cite{fracCauchy} to distributed order fractional diffusion case. We follow the methods of Baeumer and Meerschaert with some crucial changes in the proof of our main results. Our proofs work for operators $L$  that are  generators of  uniformly bounded  and continuous semigroups on   Banach spaces.

The paper is organized as follows. In the next section we give a brief introduction about semigroups and their generators. In section 3, we give preliminaries on the distributed order fractional derivatives and the corresponding inverse subordinators. We state and prove our main results in section 4.
\section{Cauchy problems and Semigroups}
Let $||f||_{1}=\int|f(x)|dx$ be the usual norm on the Banach space $L^1(\rd)$ of absolutely integrable functions
$f:\rd\rightarrow \rr$. A family of linear operators $\{T(t):t\geq 0\}$ on a Banach space $X$ such that $T(0)$ is the
identity operator and $T(s+t)=T(s)T(t)$ for all $s, t \geq 0$ is called a continuous convolution semigroup. If $||T(t)f||
\leq M||f||$ for all $f\in X$ and all $t\geq 0$ then the semigroup is uniformly bounded. If $T(t_{n})f\rightarrow T(t)f$ in $X$
for all $f\in X$ whenever $t_{n}\rightarrow t$ then the semigroup is strongly continuous. It is easy to check that
$\{T(t);t\geq 0\}$ is strongly continuous if $T(t)f\rightarrow f$ in $X$ for all $f\in X$ as $t\rightarrow 0$. A strongly
continuous convolution semigroup such that $||T(t)f||\leq||f||$ for all $t\geq 0$ and all $f\in X$ is also called a Feller
semigroup.
\\
\\ For any strongly continuous semigroup $\{T(t);t>0\}$ on a Banach space $X$ we define the generator
\begin{equation}\label{generator}
Lf=\lim_{t\rightarrow 0^{+}}\frac{T(t)f-f}{t}\hspace{0.1cm}\mbox{in}\hspace{0.1cm}X
 \end{equation}
meaning that $||t^{-1}(T(t)f-f)-Lf||\rightarrow 0$ in the Banach space norm. The domain $D(L)$ of this linear operator
is the set of all $f\in X$ for which the limit in \eqref{generator} exists. The domain $D(L)$ is dense in $X$, and $L$ is
closed, meaning that if $f_{n}\rightarrow f$ and $Lf_{n}\rightarrow g$ in $X$ then $f\in D(L)$ and $Lf=g$ (see, for example
Corollary I.2.5 in \cite{pazy}).
\\
\\ Another consequence of $T(t)$ being a strongly continuous semigroup is that $u(t)=T(t)f$ solves the abstract Cauchy problem
$$\frac{d}{dt}u(t)=Lu(t);\hspace{0.3cm}u(0)=f$$
for $f\in D(L)$. Furthermore, the integrated equation $T(t)f=L\int^{t}_{0}T(s)fds+f$ holds for all $f\in X$ (see, for example,
 Theorem I.2.4 in \cite{pazy}).
\\

\section{Distributed order fractional derivatives}\label{sec3}
The Caputo fractional derivative \cite{Caputo} is defined for $0<\beta<1$ as
\begin{equation}\label{CaputoDef}
\frac{\partial^\beta u(t,x)}{\partial t^\beta}=\frac{1}{\Gamma(1-\beta)}\int_0^t \frac{\partial
u(r,x)}{\partial r}\frac{dr}{(t-r)^\beta} .
\end{equation}
Its Laplace transform
\begin{equation}\label{CaputolT}
\int_0^\infty e^{-st} \frac{\partial^\beta u(t,x)}{\partial
t^\beta}\,ds=s^\beta \tilde u(s,x)-s^{\beta-1} u(0,x)
\end{equation}
where $\tilde u(s,x) = \int_0^\infty e^{-st}u(t,x)dt$ and incorporates the initial value in the same way as the first
derivative.
The distributed order fractional derivative is
\begin{equation}\label{DOFDdef}
\D^{(\mu)}u(t,x):=\int_0^1\frac{\partial^\beta u(t,x)}{\partial
t^\beta} \mu(d\beta),
\end{equation}
where $\mu$ is a finite Borel measure with $\mu(0,1)>0$.

For a function  $u(t, x)$ continuous in  $t\geq 0$,  the Riemann-Liouville  fractional derivative of
order $0<\beta<1$ is defined by
\begin{equation}\label{R-LDef}
\bigg(\frac{\partial }{\partial t}\bigg)^\beta u(t,x)=\frac{1}{\Gamma(1-\beta)}\frac{\partial }{\partial t}
\int_0^t \frac{u(r,x)}{(t-r)^\beta} dr .
\end{equation}
Its Laplace transform
\begin{equation}\label{CaputoLT}
\int_0^\infty e^{-st} \bigg(\frac{\partial }{\partial t}\bigg)^\beta u(t,x)\,ds=s^\beta \tilde u(s,x).
\end{equation}
If $u(\cdot, x)$ is absolutely continuous on bounded intervals (e.g., if the derivative exists everywhere and is
integrable) then the Riemann-Liouville and Caputo derivatives are related by
\begin{equation}\label{captuto-R-L-derivatives}
\frac{\partial^\beta u(t,x)}{\partial t^\beta}=\bigg(\frac{\partial }{\partial t}\bigg)^\beta u(t,x)-\frac{
t^{-\beta}u(0,x)}{\Gamma (1-\beta)} .
\end{equation}
The Riemann-Liouville fractional derivative is more general, as it does not require the first derivative to exist.
It is also possible to adopt the right-hand side of \eqref{captuto-R-L-derivatives} as the definition of the Caputo
derivative, see for example Kochubei \cite{koch3}.  Then the (extended) distributed order derivative is
\begin{equation}\label{DOFDdefK}
\D_1^{(\mu)}u(t,x):=\int_0^1\left[\bigg(\frac{\partial }{\partial t}\bigg)^\beta u(t,x)-\frac{t^{-\beta}u(0,x)}
{\Gamma (1-\beta)}\right] \mu(d\beta) ,
\end{equation}
which exists for $u(t, x)$ continuous, and agrees with the usual definition \eqref{DOFDdef} when $u(t, x)$ is absolutely continuous.

Distributed order fractional derivatives are connected with random walk limits.  For each $c>0$, take a sequence of
 i.i.d.\ waiting times $(J_n^{c})$ and i.i.d.\ jumps $(Y_n^{c})$.  Let $X^{c}(n)=Y_1^{c}+\cdots+Y_n^{c}$ be
 the particle location after $n$ jumps, and $T^{c}(n)=J_1^{c}+\cdots+J_n^{c}$ the time of the $n$th jump.
  Suppose that $X^{c}(ct)\Rightarrow A(t)$ and $T^{c}(ct)\Rightarrow D_{\psi}(t)$ as $c\to\infty$, where the limits
  $A(t)$ and $D_{\psi}(t)$ are independent L\'evy processes.
The number of jumps by time $t\geq 0$ is $N^{c}_t=\max\{n\geq 0:T^{c}(n)\leq t\}$, and
 \cite[Theorem 2.1]{M-S-triangular} shows that $X^{c}(N_t^{c})\Rightarrow A(E_\psi(t))$, where
\begin{equation}\label{Etdef}
E_\psi(t)=\inf\{\tau:D_{\psi}(\tau)> t\} .
\end{equation}

A specific mixture model from \cite{M-S-ultra} gives rise to
distributed order fractional derivatives:  Let $(B_i)$, $0<B_i<1$,
be i.i.d.\ random variables such that
$P\{J_i^{c}>u|B_i=\beta\}=c^{-1}u^{-\beta}$, for $u\geq
c^{-1/\beta}$. Then $T^{c}(cu)\Rightarrow D_{\psi}(t)$, a subordinator
with ${\mathbb E}[e^{-s D_{\psi}(t)}]=e^{-t\psi(s)}$, where
\begin{equation}\label{phiWdef}
\psi(s)=\int_0^\infty(e^{-s x}-1)\phi(dx) .
\end{equation}
Then the associated L\'evy measure is
\begin{equation}\label{psiWdef}
\phi(t,\infty)=\int_0^1 t^{-\beta}\nu(d\beta),
\end{equation}
where $\nu$ is the distribution of $B_i$.  An easy computation gives
\begin{equation}\begin{split}\label{psiW}
\psi(s)
&= \int_0^1  s^\beta \Gamma(1-\beta) \nu(d\beta)=\int_0^1  s^\beta \mu(d\beta) .
\end{split}\end{equation}
Here we define $\mu(d\beta)=\Gamma(1-\beta) \nu(d\beta)$.
Then, Theorem 3.10 in \cite{M-S-ultra} shows that
$c^{-1}N^{c}_t\Rightarrow E_\psi(t)$, where $E_\psi(t)$ is given by
\eqref{Etdef}.
 The L\'evy process $A(t)$ defines a strongly continuous convolution semigroup with generator $L$, and $A(E_\psi(t))$ is the
 stochastic
solution to the distributed order-fractional diffusion equation
\begin{equation}\label{DOFCPdef}
\D^{(\mu)} u(t,x)=L u(t,x),
\end{equation}
where $\D^{(\mu)}$ is given by \eqref{DOFDdef} with $\mu(d\beta)=\Gamma(1-\beta)\nu(d\beta)$.
The condition
\begin{equation}\label{finite-mu-bound}
\int_0^1  \frac 1{1-\beta}\, \nu(d\beta)<\infty
\end{equation}
is imposed to ensure that $\mu(0,1)<\infty$. Since
$\phi(0,\infty)=\infty$ in \eqref{phiWdef}, Theorem 3.1
 in \cite{M-S-triangular} implies that $E_\psi(t)$ has a Lebesgue density
\begin{equation}\label{E-lebesgue-density}
f_{E_\psi(t)}(x)=\int_0^t \phi(t-y,\infty) P_{D_\psi(x)}(dy) .
\end{equation}
Note that $E_\psi(t)$ is almost surely continuous and  nondecreasing.

 \section{Main Results}

 Let $D_\psi(t)$ be a
strictly increasing L\'evy process (subordinator) with ${\mathbb E}[e^{-s D_\psi(t)}]=e^{-t\psi(s)}$, where the Laplace exponent
\begin{equation}\label{psiD2}
\psi(s)=bs+\int_0^\infty(e^{-s x}-1)\phi(dx) ,
\end{equation}
$b\geq 0$, and $\phi$ is the L\'evy measure of $D_\psi$.  Then we must have either
\begin{equation}\label{A1}
\phi(0,\infty)=\infty ,
\end{equation}
or $b>0$, or both, see \cite{M-S-triangular}.  Let
\begin{equation}\label{Epsi-def}
E_\psi(t)=\inf\{\tau\geq 0:\ D_{\psi}(\tau)>t\}
\end{equation}
be the inverse subordinator.

Let $T$ be a uniformly bounded, strongly
continuous semigroup on a Banach space.
 Let
 \begin{equation}\label{eq-S}
 S(t)f=\displaystyle \int^{\infty}_{0}(T(l)f)f_{E_\psi(t)}(l)dl
\end{equation}
 where $f_{E_\psi(t)}(l)$ is a Lebesgue density of $E_\psi(t)$.

 Using \eqref{E-lebesgue-density}, it is easy to show that
  $$ \int^{\infty}_{0}e^{-st}f_{E_\psi(t)}(l)dt=\frac{1}{s}\psi(s)e^{-l\psi(s)}.$$
Using Fubini's Theorem, we get
\begin{equation}\label{eq-laplace}
\int^{\infty}_{0}\psi(s)e^{-l\psi(s)}T(l)fdl=s\int^{\infty}_{0}e^{-st}S(t)fdt.
\end{equation}

We define a {\it sectorial region} of the complex plane
$\C(\alpha)=\{re^{i\theta}\in \C:\ r>0, |\theta|<\alpha \}.$
We call a family of linear operators on a Banach space $X$ {\it strongly analytic in a sectorial region} if for some $\alpha>0$ the mapping $t\to T(t)f$ has an analytic extension to the sectorial region $\C(\alpha)$ for all $f\in X$ (see, for example, section 3.12 in \cite{HiPh}).

Let $0<\beta_{1}<\beta_{2}<\cdot\cdot\cdot < \beta_{n}<1$. In the next theorem we consider the case where $$\psi(s)=c_{1}s^{\beta_{1}}+c_{2}s^{\beta_{2}}+\cdots +c_{n}s^{\beta_{n}}.$$
In this case the L\'evy subordinator can be written as
$$D_\psi(t)=(c_1)^{1/\beta_1}D^1(t)+(c_2)^{1/\beta_2}D^2(t)+\cdots+ (c_n)^{1/\beta_n}D^n(t)$$
where $D^1(t), D^2(t),\cdots , D^n(t)$ are independent  stable subordinators of index $0<\beta_{1}<\beta_{2}<\cdot\cdot\cdot < \beta_{n}<1$.

 \begin{theorem}\label{theorem1} Let $(X,||.||)$ be a Banach space and $L$ be the generator of  a uniformly bounded, strongly
continuous semigroup $\{T(t):t\geq 0\}$. Then the family $\{S(t):t\geq 0\}$ of linear operators from $X$ into $X$ given by
\eqref{eq-S} is uniformly bounded and strongly analytic in a sectorial region. Furthermore,
$\{S(t): t\geq 0\}$ is strongly continuous and $g(x, t)= S(t)f(x)$ is a solution of
$$\displaystyle \sum_{i=1}^{n}c_{i}\frac{\partial^{\beta_{i}}g(x,t)}{\partial t^{\beta_{i}}}
=Lg(x, t); \ g(x,0)=f(x).$$
$\hspace{0.2cm}\mbox{for}\hspace{0.2cm}\beta_{1}<\beta_{2}<\cdot\cdot\cdot < \beta_{n}\in (0, 1)$
\end{theorem}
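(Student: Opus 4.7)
The plan is to proceed via Laplace transforms, following the strategy of Baeumer and Meerschaert \cite{fracCauchy}. Since $L$ generates a uniformly bounded $C_0$-semigroup with bound $M$, the resolvent is given by $(\lambda I - L)^{-1}f = \int_0^\infty e^{-\lambda l} T(l)f\,dl$ for $\Re\lambda>0$, with $\|(\lambda I - L)^{-1}\| \le M/\Re\lambda$. Combining this with \eqref{eq-laplace} yields the candidate Laplace transform
\[
\tilde S(s)f = s^{-1}\psi(s)\bigl(\psi(s)I - L\bigr)^{-1}f,\qquad \Re s > 0,
\]
which drives everything. Uniform boundedness $\|S(t)f\|\le M\|f\|$ is immediate from \eqref{eq-S}, since $f_{E_\psi(t)}$ is a probability density.

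For the strong analytic extension, I would define
\[
S(t)f = \frac{1}{2\pi i}\int_\Gamma e^{st}\,s^{-1}\psi(s)\bigl(\psi(s)I - L\bigr)^{-1}f\,ds
\]
for $t$ in a sector $\C(\alpha)$, where $\Gamma$ is a Hankel-type contour running from $\infty e^{-i\varphi}$ around the origin to $\infty e^{i\varphi}$. The key analytic observation is that for $s=re^{i\theta}$ with $|\theta|<\pi/(2\beta_n)$, each $s^{\beta_i}=r^{\beta_i}e^{i\beta_i\theta}$ has positive real part because $|\beta_i\theta|<\pi/2$; consequently $\Re\psi(s)>0$ and the resolvent $(\psi(s)I-L)^{-1}$ exists with norm $\le M/\Re\psi(s)$. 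Since $\beta_n<1$, the admissible angle $\pi/(2\beta_n)$ strictly exceeds $\pi/2$, which is precisely the slack needed to deform the Bromwich line into a sectorial contour. Choosing $\varphi\in(\pi/2,\pi/(2\beta_n))$ yields analyticity of $t\mapsto S(t)f$ on $\C(\varphi-\pi/2)$. The growth rates $|\psi(s)|\lesssim |s|^{\beta_n}$ for large $|s|$ and $|\psi(s)|\lesssim |s|^{\beta_1}$ for small $|s|$ supply the decay needed to make the contour integral absolutely convergent and to justify differentiation in $t$.

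To verify the fractional equation, I would take the Laplace transform of the proposed equation using \eqref{CaputolT}:
\[
\sum_{i=1}^n c_i\bigl[s^{\beta_i}\tilde g(s) - s^{\beta_i-1}f\bigr] = L\tilde g(s).
\]
Substituting $\psi(s)=\sum_i c_i s^{\beta_i}$ and rearranging gives $\tilde g(s)=s^{-1}\psi(s)(\psi(s)I-L)^{-1}f=\tilde S(s)f$. Since $\tilde S(s)f$ lies in $D(L)$ by construction and $L$ is closed, one can pass $L$ through the Laplace inversion to recover the PDE pointwise in $t$. Strong continuity and the initial condition $S(0)f=f$ follow from
\[
\|S(t)f - f\| \le \int_0^\infty \|T(l)f-f\|\, f_{E_\psi(t)}(l)\,dl
\]
together with dominated convergence, using that $E_\psi(t)$ is a.s.\ continuous and nondecreasing with $E_\psi(0)=0$ and that $T$ is strongly continuous at $0$.

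The main technical obstacle is the contour-deformation step: one must verify that the integrand is analytic throughout the region swept between the Bromwich line and $\Gamma$ (in particular, that $\psi(s)$ stays out of the spectrum of $L$, which forces the right-half-plane analysis above), that the contributions from the large circular arcs vanish, and that $\arg\psi(s)$ remains controlled along $\Gamma$ even though the exponents $\beta_1<\cdots<\beta_n$ rotate $s$ at different rates and dominate at different scales of $|s|$. Once the geometry of $\psi$ on complex sectors is pinned down, uniform bounds for $S(t)$ on $\C(\alpha)$ and the differentiability in $t$ follow from routine, if careful, dominated-convergence and Morera-type arguments.
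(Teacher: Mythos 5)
Your verification of the fractional equation is run in the wrong direction, and it leans on \eqref{CaputolT} in a regime where that formula is not available. Showing that a putative solution $g$ of the transformed equation must satisfy $\tilde g(s)=s^{-1}\psi(s)(\psi(s)I-L)^{-1}f$ gives consistency in the Laplace domain, not that $S(t)f$ solves the equation; for the latter you must prove that $t\mapsto S(t)f$ actually possesses the derivatives $\partial_t^{\beta_i}S(t)f$ and compute their transforms. Formula \eqref{CaputolT} presupposes the classical Caputo form \eqref{CaputoDef}, i.e.\ that $S(\cdot)f$ is differentiable with derivative integrable near $t=0$. From your own contour representation the natural estimate for general $f\in X$ is $\|\frac{d}{dt}S(t)f\|\le Ct^{-1}\|f\|$ (the integrand $\psi(s)(\psi(s)I-L)^{-1}f$ is merely bounded along the contour), which is not integrable at $0$; an integrable bound of the type $t^{\beta_1-1}\|Lf\|$ appears only for $f\in D(L)$. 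Since the theorem is asserted for every $f\in X$, the equation has to be interpreted and proved through the Riemann--Liouville/extended Caputo form \eqref{captuto-R-L-derivatives}--\eqref{DOFDdefK}: one first establishes the integrated identity $\sum_i c_i\bigl(\int_0^t\frac{(t-u)^{-\beta_i}}{\Gamma(1-\beta_i)}S(u)f\,du-\frac{t^{1-\beta_i}}{\Gamma(2-\beta_i)}f\bigr)=L\int_0^t S(u)f\,du$, i.e.\ the analogue of \eqref{eq-38}, and only then differentiates, using the analyticity of the fractional integrals \eqref{eq-35} and the closedness of $L$. Relatedly, ``pass $L$ through the Laplace inversion'' needs a concrete justification: $t\mapsto S(t)f$ is only known to be bounded and continuous, which is why the paper uses the Phragm\'en--Mikusi\'{n}ski inversion formula \cite{baeumer-03}, whose approximants are finite linear combinations of resolvent values so that closedness of $L$ applies, rather than a Bromwich inversion. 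You also still owe the identification of your contour integral with the family \eqref{eq-S} (via \eqref{eq-laplace} and uniqueness of the Laplace transform for continuous functions); without it, your analyticity statement concerns a possibly different family of operators.

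Beyond this, your construction is a genuinely different route from the paper's. The paper never writes a contour integral: it checks that $r(s)=s^{-1}\psi(s)q(\psi(s))$ extends analytically to a sector $\C(\pi/2+\alpha)$ with $\|sr(s)\|$ bounded on subsectors --- the same geometric observation you make, that $|\beta_i\theta|<\pi/2$ forces $\Re\psi(s)>0$, yielding the $1/\cos(\beta_i\theta)$ bounds --- and then invokes Theorem 2.6.1 of \cite{ABHN} to conclude that $r$ is the Laplace transform of a function analytic on $\C(\alpha)$; uniqueness of the Laplace transform transfers the analytic extension to $t\mapsto S(t)f$. That abstract representation theorem absorbs exactly the contour-deformation and vanishing-arc estimates you flag as your main technical obstacle, so if you insist on the explicit Hankel contour you must carry those estimates out yourself (they are standard in the theory of resolvent families, but not free). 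Your uniform boundedness and strong continuity arguments coincide with the paper's.
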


\begin{proof}We adapt the methods of  Baeumer and  Meerschaert \cite[Theorem 3.1]{fracCauchy} with some very crucial changes in the following. For the purpose of completeness of the arguments we included some parts verbatim from Baeumer and Meerschaert \cite{fracCauchy}.

 Since $\{T(t):t\geq 0\}$ is uniformly bounded we have $||T(t)f||\leq M||f||$ for all $f\in X$.
Bochner's Theorem (\cite[Thm. 1.1.4]{ABHN}) implies that a function $F:\mathbb{R}^{1}\rightarrow X$ is integrable if and only if
$F(s)$ is measurable and $||F(s)||$ is integrable, in which case
$$||\int F(l)dl||\leq \int||F(l)||dl.$$
For fixed $f\in X$ and applying Bochner's Theorem with $F(l)=(T(l)f)f_{E_\psi(t)}(l)$ we  have that
\begin{eqnarray*}
 ||S(t)f|| &=& ||\int^{\infty}_{0}(T(l)f)f_{E_\psi(t)}(l)dl||\\ &\leq& \int^{\infty}_{0} ||(T(l)f)f_{E_\psi(t)}(l)||dl\\ &=&
\int^{\infty}_{0}||T(l)f||f_{E_\psi(t)}(l)dl\\ &\leq& \int^{\infty}_{0}M||f||f_{E_\psi(t)}(l)dl=M||f||
\end{eqnarray*}
since $f_{E_\psi(t)}(l)$ is the Lebesgue density for $E_\psi(t)$. This shows that $\{S(t):t\geq 0\}$ is well defined and uniformly
bounded family of linear operators on $X$.
\\
\\
\\ The definition of $T(t)$ and dominated convergence theorem implies
\begin{eqnarray*}
 ||S(t)f-f|| &=& ||\displaystyle \int^{\infty}_{0}(T(l)f-f)f_{E_\psi(t)}(l)dl|| \\ &\leq&
\displaystyle \int^{\infty}_{0}||T(l)f-f||f_{E_\psi(t)}(l)dl\\ &\to &||T(0)f-f||=0
\end{eqnarray*}
as $t\rightarrow 0^{+}$. This shows $\displaystyle \lim_{t\rightarrow 0^{+}}S(t)f=f$. Now if $t, h>0$ then we have
$$||S(t+h)f-S(t)f||\leq \int^{\infty}_{0}||T(l)f|||f_{E_\psi(t+h)}(l)-f_{E_\psi(t)(l)}|dl\rightarrow 0$$ as
$h\rightarrow 0^{+}$ since
 $E_\psi(t+h)\implies E_\psi(t)$ as $h\to 0$.

 This shows that $\{S(t):t>0\}$ is strongly continuous.
\\
\\ Let $q(s)=\int^{\infty}_{0}e^{-st}T(t)fdt$ and $r(s)=\int^{\infty}_{0}e^{-st}S(t)fdt$ for any $s>0,$ so that we can write
\eqref{eq-laplace} in the form
\begin{equation}\label{eq-32}
\psi(s)q(\psi(s))=sr(s)
\end{equation}
for any $s>0$. Now we want to show that this relation holds for certain complex numbers. Fix $s \in \C_{+}=\{z\in \C:\mathcal{R}(z)> 0\}$,
 and let $F(t)=e^{-st}T(t)f$. Since $F$ is continuous, it is measurable, and we have $||F(t)||\leq |e^{-st}|M||f||=e^{-t
\mathcal{R}(s)}
M||f||$ since $||T(t)f||\leq M||f||,$ so that the function $||F(t)||$ is integrable. Then Bochner's Theorem implies that
$q(s)=\int_0^\infty F(t)dt$ exists for all $s\in \C_{+},$ with
\begin{equation}\label{eq-33}
||q(s)||=||\int_0^\infty F(t)dt||\leq \int_0^\infty ||F(t)||dt\leq \int_0^\infty e^{-t\mathcal{R}(s)}M||f|| dt=\frac{M||f||}{\mathcal{R}(s)}.
\end{equation}
Since $q(s)$ is the Laplace transform of the bounded continuous function $t\mapsto T(t)f$, Theorem 1.5.1 of $[1]$ shows that
$q(s)$ is an analytic function on $s\in \C_{+}$.
\\
\\
 Now we carry out the details of the proof for only in the case $n=2$.
 We want to show that $r(s)$ is the Laplace transform of an analytic function defined on a sectorial region. Theorem
2.6.1 of \cite{ABHN} implies that if for some real $x$ and some $\alpha \in (0, \pi/2]$ the function $r(s)$ has an analytic extension
to the region $x+\C(\alpha+\pi/2)$ and if $\sup\{||(s-x)r(s)||:s\in x+\C(\alpha'+\pi/2)\}<\infty$ for all $0<\alpha'<\alpha$,
then there exists an analytic function $\overline{r}(t)$ on $t\in \C(\alpha)$ such that $r(s)$ is the Laplace transform of
$\overline{r}(t)$. We will apply the theorem with $x=0$. It follows from (\ref{eq-32}) that $r(s)=\frac{1}{s}\psi(s)q(\psi(s))$
 for all $s>0$, but the right hand side here is well defined and analytic on the set of complex $s$ that are not on the branch
cut and are such that $\mathcal{R}(\psi(s))=\mathcal{R}(c_{1}s^{\beta_{1}}+c_{2}s^{\beta_{2}})>0$, since
$\beta_{1}<\beta_{2}$, it suffices to consider $\mathcal{R}(s^{\beta_{2}})>0$, so if $1/2<\beta_{2}<1$, then $r(s)$ has a unique analytic extension to the sectorial region
$\C(\pi/2\beta_{2})=\{s\in \C:\mathcal{R}e(s^{\beta_{2}})>0\}$ (e.g., \cite[3.11.5]{HiPh} ), and note that $\pi/2\beta_{2}=\pi/2+\alpha$
for some $\alpha>0$. If $\beta_{2}<1/2$ then $r(s)$ has an analytic extension to the sectorial region $s\in \C(\pi/2+\alpha)$
for any $\alpha < \pi/2$ and $\mathcal{R}(s^{\beta_{2}})>0$ for all such $s$. Now for any complex $s=re^{i\theta}$ such that
$s\in \C(\pi/2+\alpha')$ for any $0<\alpha'<\alpha$, we have in view of \eqref{eq-32} and \eqref{eq-33} that
\begin{eqnarray}
 ||sr(s)|| &=& ||\psi(s)q(\psi(s))|| \nonumber\\
  &=& |c_{1}s^{\beta_{1}}+c_{2}s^{\beta_{2}}|||q(c_{1}
s^{\beta_{1}}+c_{2}s^{\beta_{2}})||\nonumber \\
&=&\left| \frac{c_{1}r^{\beta_{1}}e^{i\beta_{1}\theta}+c_{2}r^{\beta_{2}}
e^{i\beta_{2}\theta}}{c_{1}r^{\beta_{1}}\cos(\beta_{1}\theta)+c_{2}
r^{\beta_{2}}\cos(\beta_{2}\theta)}\right|\nonumber\\
& & \ \ \times||\mathcal{R}
(c_{1}s^{\beta_{1}}+c_{2}s^{\beta{2}})q(c_{1}s^{\beta_{1}}+c_{2}s^{\beta_{2}})||\nonumber\\
 &\leq & \bigg|
\frac{c_{1}r^{\beta_{1}}e^{i\beta_{1}\theta}}{c_{1}r^{\beta_{1}}\cos(\beta_{1}\theta)+
c_{2}r^{\beta_{2}}\cos(\beta_{2}\theta)}
\bigg| M||f|| \nonumber\\
 && +\left|
\frac{c_{2}r^{\beta_{2}}e^{i\beta_{2}\theta}}{c_{1}r^{\beta_{1}}
\cos(\beta_{1}\theta)+c_{2}r^{\beta_{2}}\cos(\beta_{2}\theta)}
\right| M||f|| \nonumber\\
 &\leq& \left (\frac{1}{cos(\beta_{1}\theta)}+\frac{1}{\cos(\beta_{2}\theta)}\right)M||f||  \label{eq-34}
\end{eqnarray}
which is finite since $|\beta_{1}\theta|<|\beta_{2}\theta|<\pi/2$. Hence Theorem 2.6.1 of \cite{ABHN} implies there exists an
analytic function $\overline{r}(t)$ on $t\in \C(\alpha)$ with Laplace transform $r(s)$. Using the uniqueness of the Laplace
transform (e.g., \cite[Thm. 1.7.3]{ABHN}), if follows that $t\mapsto S(t)f$ has an analytic extension (namely $t\mapsto \overline{r}
(t)$) to the sectorial region $t\in \C(\alpha)$. Next we wish to apply Theorem 2.6.1 of \cite{ABHN}  again to show that for any
$0<\beta_{1}<\beta_{2}<1$ the function
\begin{equation}\label{eq-35}
t\mapsto \int^{t}_{0}\frac{(t-u)^{-\beta_{i}}}{\Gamma(1-\beta_{i})}S(u)fdu\hspace{1cm}i=1,2
\end{equation}
has an analytic extension to the same sectorial region $t\in \C(\alpha)$. It is easy to show  that
\begin{equation}\label{eq-36}
\int^{\infty}_{0}\frac{t^{-\beta_{i}}}{\Gamma(1-\beta_{i})}e^{-st}dt=s^{\beta_{i}-1}
\end{equation}
for any $0<\beta_{i}<1$ and any $s>0$. Since $r(s)$ is the Laplace transform of $t\mapsto S(t)f$, it follows from the
convolution property of the Laplace transform (e.g. property 1.6.4 \cite{ABHN}) that the function \eqref{eq-35} has Laplace transform
$s^{\beta_{i}-1}r(s)$ for all $s>0$. Since $r(s)$ has an analytic extension to the sectorial region $s\in \C(\pi/2+\alpha)$,
so does $s^{\beta_{i}-1}r(s)$. For any $x>0$, if $s=x+re^{i\theta}$ for some $r>0$ and $|\theta|<\pi/2+\alpha'$ for any
$0<\alpha'<\alpha$ then in view of \eqref{eq-34} we have
\begin{eqnarray*}
 ||(s-x)s^{\beta_{i}-1}r(s)|| &=& ||(s-x)s^{\beta_{i}-2}sr(s)||\\ &\leq& r||s^{\beta_{i}-2}||\left(\frac{1}{\cos(\beta_{1}
\theta)}+\frac{1}{\cos(\beta_{2}\theta)}\right)M||f||
\end{eqnarray*}
 where $||s||$ is bounded away from zero, $||s||\leq r+x$ and $\beta_{i}-2<-1$, so that $||(s-x)s^{\beta_{i}-1}r(s)||$ is
bounded on the region $x+\C(\alpha'+\pi/2)$ for all $0<\alpha'<\alpha$. Then it follows as before that the function \eqref{eq-35} has
an analytic extension to the sectorial region $t\in \C(\alpha)$.
\\
\\ Since $\{T(t):t\geq 0\}$ is a strongly continuous semigroup with generator $L$, Theorem 1.2.4 (b) in $[16]$ implies that
$\int^{t}_{0}T(s)fds$ is in the domain of the operator $L$ and
$$T(t)f= L\int^{t}_{0} T(s)fds+f.$$
Since the Laplace transform  $q(s)$ of $t\mapsto T(t)f$ exists, Corollary 1.6.5 of $[1]$ show that the Laplace transform of
$t\mapsto \int^{t}_{0}T(s)fds$ exists and equals $s^{-1}q(s)$. Corollary 1.2.5 \cite{pazy} shows that $L$ is closed. Fix $s$ and let
$g=q(s)=\int^{\infty}_{0}e^{-st}T(t)fdt$ and let $g_{n}$ be a finite Riemann sum approximating this integral, so that
$g_{n} \rightarrow g$ in $X$. Let $h_{n}=s^{-1}g_{n}$ and $h=s^{-1}g$. Then $g_{n},g$ are in the domain of $L,g_{n}
\rightarrow g$ and $h_{n}\rightarrow h$. Since $h_{n}$ is a finite sum we also have $L(h_{n})=s^{-1}L(g_{n})
\rightarrow s^{-1}L(g)$. Since $L$ is closed, this implies that $h$ is in the domain of $L$ and that $L(h)=s^{-1}L(g)$.
In other words, the Laplace transform of $t\mapsto L\int^{t}_{0}T(s)fds$ exists and equals $s^{-1}Lq(s)$. Then we have by
taking the Laplace transform of each term
$$\int^{\infty}_{0}e^{-sl}T(l)fdt=s^{-1}L\int^{\infty}_{0}e^{-sl}T(l)fdl+s^{-1}f$$
for all $s>0$. Multiply through by $s$ to obtain
$$s\int^{\infty}_{0}e^{-sl}T(l)fdl=L\int^{\infty}_{0}e^{-sl}T(l)fdl+f$$
and substitute $c_{1}s^{\beta_{1}}+c_{2}s^{\beta_{2}}$ for $s$ to get
\\
\\ $(c_{1}s^{\beta_{1}}+c_{2}s^{\beta_{2}})\int^{\infty}_{0}e^{-(c_{1}s^{\beta_{1}}+c_{2}s^{\beta_{2}})l}T(l)fdl=
L\int^{\infty}_{0}e^{-(c_{1}s^{\beta_{1}}+c_{2}s^{\beta_{2}})l}T(l)fdl+f$
\\
\\ for all $s>0$. Now use \eqref{eq-laplace}  twice to get
$$s\int^{\infty}_{0}e^{-sl}S(l)fdl=L\bigg( \frac{s}{c_{1}s^{\beta_{1}}+c_{2}s^{\beta_{2}}}\int^{\infty}_{0}
e^{-sl}S(l)fdl\bigg)+f $$
\\ and multiplying both sides by $\hspace{0.2cm}c_{1}s^{\beta_{1}-2}+c_{2}s^{\beta_{2}-2}$ we get
\begin{equation}\label{eq-37}
(c_{1}s^{\beta_{1}-1}+c_{2}s^{\beta_{2}-1})\int^{\infty}_{0}e^{-sl}S(l)fdl=Ls^{-1}
\int^{\infty}_{0}e^{-st}S(l)fdl+ c_{1}s^{\beta_{1}-2}f+c_{2}s^{\beta_{2}-2}f.
\end{equation}
 where we have again used the fact that $L$ is closed. The term on the left hand side of \eqref{eq-37} is $c_{1}s^{\beta_{1}-1}r(s)
+c_{2}s^{\beta_{2}-1}r(s)$ which was already shown to be the Laplace transform of the function $c_{1}\int^{t}_{0}
\frac{(t-u)^{-\beta_{1}}}{\Gamma(1-\beta_{1})}S(u)fdu+c_{2}\int^{t}_{0}
\frac{(t-u)^{-\beta_{2}}}{\Gamma(1-\beta_{2})}S(u)fdu$,
 which is analytic in a sectorial region. Equation \eqref{eq-36} also shows that $s^{\beta_{i}-2}$ is the Laplace transform of
$t\mapsto \frac{t^{1-\beta_{i}}}{\Gamma(2-\beta)}$. Now take the term $c_{1}s^{\beta_{1}-2}f+c_{2}s^{\beta_{2}-2}f$ to the
other side and invert the Laplace transforms. Using the fact that $\{S(t):t\geq 0\}$ is uniformly bounded, we can apply the
Phragmen-Mikusinski Inversion formula for the Laplace transform (see \cite[Corollary 1.4]{baeumer-03}) to obtain
\\
\\ $c_{1}\left(\int^{t}_{0}\frac{(t-u)^{-\beta_{1}}}{\Gamma(1-\beta_{1})}S(u)fdu-\frac{t^{1-\beta_{1}}}
{\Gamma(2-\beta_{1})}f\right)+c_{2}\left(\int^{t}_{0}\frac{(t-u)^{-\beta_{2}}}{\Gamma(1-\beta_{2})}S(u)fdu-\frac{t^{1-\beta_{2}}}
{\Gamma(2-\beta_{2})}f\right)$
\\ $=\displaystyle \lim_{n\rightarrow \infty}L \sum^{N_{n}}_{j=1}\alpha_{n,j}\frac{e^{c_{nj}l}}{c_{nj}}
\int^{\infty}_{0}e^{-c_{nj}l}S(l)fdl$
\\
\\ where the constants $N_{n}, \alpha_{n,j}$, and $c_{nj}$ are given by the inversion formula and the limit is uniform on
compact sets. Using again the fact that $L$ is closed we get
\\
\\
\begin{eqnarray}
&&c_{1}\left(\int^{t}_{0}\frac{(t-u)^{-\beta_{1}}}{\Gamma(1-\beta_{1})}S(u)fdu-\frac{t^{1-\beta_{1}}}{\Gamma(2-\beta_{1})}f
\right)\nonumber\\
&& + c_{2}\left(\int^{t}_{0}\frac{(t-u)^{-\beta_{2}}}{\Gamma(1-\beta_{2})}S(u)fdu-\frac{t^{1-\beta_{2}}}
{\Gamma(2-\beta_{2})}f\right)\nonumber\\
&&=L\int^{t}_{0}S(l)fdl\label{eq-38}
\end{eqnarray}
\\ and since the function \eqref{eq-35} is analytic in a sectorial region, the left hand side of \eqref{eq-38} is differentiable for $t>0$
Corollary 1.6.6 of \cite{ABHN} shows that
\begin{equation}\label{eq-39}
\frac{d}{dt}\int^{t}_{0}\frac{(t-u)^{-\beta_{i}}}{\Gamma(1-\beta_{i})}S(u)fdu
\end{equation}
has Laplace transform $s^{\beta_{i}}r(s)$ and hence \eqref{eq-39} equals $\frac{d^{\beta_{i}}S(t)f}{dt^{\beta_{i}}}$. Now take the
derivative with respect to $t$ on both sides of \eqref{eq-38} to obtain
$$c_{1}\left(\frac{d^{\beta_{1}}}{dt^{\beta_{1}}}S(t)f-\frac{t^{-\beta_{1}}}{\Gamma(1-\beta_{1})}f\right)+c_{2}
\left(\frac{d^{\beta_{2}}}{dt^{\beta_{2}}}S(t)f-\frac{t^{-\beta_{2}}}{\Gamma(1-\beta_{2})}f\right)=LS(t)f$$
for all $t>0$, where we use the fact that $L$ is closed to justify taking the derivative inside. Using the relation \eqref{captuto-R-L-derivatives} between the Rieman-Liuoville and Caputo fractional derivatives we proved the theorem
\end{proof}

The next theorem provides an extension with subordinator $D_\mu(t)$ as the weighted average of an arbitrary number of independent stable subordinators.
Let $E_\mu(t)$ be the inverse of the subordinator $D_\mu(t)$ with Laplace exponent $\psi(s)=\int_0^1s^\beta d\mu(\beta)$ where $supp \mu\subset (0,1)$.

\begin{theorem}
Let $(X, ||\cdot||)$ be a Banach space and $\mu$ be  a positive finite measure with supp
$\mu \subset (0, 1)$. Then the family $\{S(t):t\geq 0\}$ of linear operators from $X$ into $X$ given by $S(t)f=\displaystyle
\int^{\infty}_{0}(T(l)f)f_{E_\mu(t)}(l)dl$, is uniformly bounded and strongly analytic in a sectorial region. Furthermore,
$\{S(t):t\geq 0\}$ is strongly continuous and $g(x, t)=S(t)f(x)$ is a solution of
\begin{equation}\label{distributed-frac-prob}
\int^{1}_{0}\partial^{\beta}_{t}g(x,t)\mu(d\beta)= Lg(x,t); \ g(x,0)=f(x).
\end{equation}
\end{theorem}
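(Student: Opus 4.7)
The plan is to follow the proof of Theorem \ref{theorem1} essentially verbatim, replacing the finite sum $\psi(s)=\sum_{i=1}^n c_i s^{\beta_i}$ by the integral $\psi(s)=\int_0^1 s^\beta\,\mu(d\beta)$. Because $\mathrm{supp}\,\mu\subset(0,1)$, I can fix $0<\beta_-\leq\beta_+<1$ with $\mathrm{supp}\,\mu\subset[\beta_-,\beta_+]$, and then $\alpha:=\pi/(2\beta_+)-\pi/2>0$ will be the opening half-angle of the sector of analyticity to be constructed. Throughout, the Fubini-type interchanges of $\mu(d\beta)$ with Laplace integrals, with $L$, and with $d/dt$ will be justified by the finiteness of $\mu$, the compactness of $\mathrm{supp}\,\mu$ in $(0,1)$, and uniform-in-$\beta$ estimates.

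The uniform boundedness and strong continuity of $\{S(t):t\geq 0\}$ carry over from Theorem \ref{theorem1} without change, since they depend only on $f_{E_\psi(t)}$ being a probability density and on the continuity $E_\psi(t+h)\Rightarrow E_\psi(t)$. For analyticity of $r(s):=\int_0^\infty e^{-st}S(t)f\,dt$, I first observe that $\psi(s)=\int_0^1 s^\beta\,\mu(d\beta)$ is analytic on $\C\setminus(-\infty,0]$ by differentiation under the $\mu$-integral, and then for $s=re^{i\theta}$ with $|\theta|<\pi/(2\beta_+)$,
\begin{equation*}
\Re\psi(s)=\int_0^1 r^\beta\cos(\beta\theta)\,\mu(d\beta)\geq\cos(\beta_+|\theta|)\int_0^1 r^\beta\,\mu(d\beta)>0,
\end{equation*}
so \eqref{eq-laplace} continues to yield $r(s)=\psi(s)q(\psi(s))/s$ on $\C(\pi/(2\beta_+))$, with
\begin{equation*}
\|sr(s)\|\leq\frac{|\psi(s)|}{\Re\psi(s)}M\|f\|\leq\frac{M\|f\|}{\cos(\beta_+|\theta|)}
\end{equation*}
bounded on every closed subsector $|\theta|\leq\theta'<\pi/(2\beta_+)$. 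Theorem 2.6.1 of \cite{ABHN} then provides the analytic extension of $t\mapsto S(t)f$ to $\C(\alpha)$. The same estimate combined with an extra factor of $\|s^{\beta-2}\|$, as in Theorem \ref{theorem1}, shows that $t\mapsto\int_0^t\frac{(t-u)^{-\beta}}{\Gamma(1-\beta)}S(u)f\,du$ extends analytically to $\C(\alpha)$ uniformly for $\beta\in\mathrm{supp}\,\mu$.

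To obtain the fractional equation, I would start from the integrated Cauchy relation $T(t)f=L\int_0^t T(l)f\,dl+f$, take Laplace transforms, substitute $\psi(s)$ for $s$, apply \eqref{eq-laplace} twice, and multiply by $\psi(s)/s^2$ to arrive at
\begin{equation*}
\int_0^1 s^{\beta-1}r(s)\,\mu(d\beta)=L\,\frac{r(s)}{s}+\int_0^1 s^{\beta-2}\,\mu(d\beta)\,f,
\end{equation*}
where Fubini lets me bring $\mu(d\beta)$ outside the Laplace integrals. Identifying each side as a Laplace transform via \eqref{eq-36} and the convolution property, then inverting with the Phragmen--Mikusinski formula exactly as in Theorem \ref{theorem1}, gives
\begin{equation*}
\int_0^1\!\left[\int_0^t\frac{(t-u)^{-\beta}}{\Gamma(1-\beta)}S(u)f\,du-\frac{t^{1-\beta}}{\Gamma(2-\beta)}f\right]\mu(d\beta)=L\!\int_0^t\! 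S(l)f\,dl.
\end{equation*}
Differentiating in $t$, commuting $d/dt$ with the $\mu$-integral (justified by the sectorial analyticity already established), and applying \eqref{captuto-R-L-derivatives} yields \eqref{distributed-frac-prob}.

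The hard part will be the repeated Fubini-style interchanges, in particular pulling $\mu(d\beta)$ through the closed operator $L$: this has to be done via the finite-Riemann-sum approximation used in Theorem \ref{theorem1}, applied once to approximate the Laplace integral inside $L$ and once more to approximate the $\mu$-integral, with the closedness of $L$ invoked at each stage. Uniform bounds on $r^\beta$, $1/\cos(\beta\theta)$, $1/\Gamma(1-\beta)$, and $1/\Gamma(2-\beta)$ across $\beta\in[\beta_-,\beta_+]$ make all the dominated-convergence estimates routine, so the containment $\mathrm{supp}\,\mu\Subset(0,1)$ is precisely the hypothesis that keeps the argument from Theorem \ref{theorem1} intact.
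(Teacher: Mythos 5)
Your proposal is correct and follows essentially the same route as the paper's proof: the identity $\psi(s)q(\psi(s))=sr(s)$ with $\psi(s)=\int_0^1 s^\beta\mu(d\beta)$, analytic extension of $r$ on a sector determined by $\beta_+=\sup\,\mathrm{supp}\,\mu$ with a $1/\cos$-type bound, Theorem 2.6.1 of \cite{ABHN}, the Phragmen--Mikusinski inversion, Fubini to move $\mu(d\beta)$ through the convolution, closedness of $L$, and a final differentiation plus \eqref{captuto-R-L-derivatives}. The only cosmetic differences are your slightly cleaner bound $\|sr(s)\|\le M\|f\|/\cos(\beta_+|\theta|)$ in place of the paper's $(1+1/\cos(\pi/2-\epsilon))M\|f\|$, and the need (as in the paper's case distinction) to cap the aperture at $\pi/2$ when $\beta_+<1/2$, since $\pi/(2\beta_+)-\pi/2$ may then exceed the range allowed by the branch cut and by Theorem 2.6.1.
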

\begin{proof} Since supp $\mu \subset (0, 1)$, the density $f_{E_\mu(t)}(l),l\geq 0$, exists and since $||T(l)f||
\leq M||f||$, then $S(t)f$ exists and $||S(t)f||\leq M||f||$. Also, $S(t)f$ is strongly continuous as in Theorem \ref{theorem1}.

 Let $q(s)=  \int^{\infty}_{0}e^{-st}T(t)fdt$ and $r(s)= \int^{\infty}_{0}e^{-st}S(t)fdt$ for any
$s>0$, then by \eqref{eq-laplace} we have
\begin{equation}\label{thm2-eq41}
\psi(s)q(\psi(s))=sr(s)\ \  \mathrm{where}\ \ \psi(s)=
\int^{1}_{0}s^{\beta}\mu(d\beta)
\end{equation}
for any $s>0$. Now we want to show that this relation holds for certain complex numbers $s$. In Theorem \ref{theorem1}, we have shown
that $q(s)$ is an analytic function on $s\in \C_{+}$ and $||q(s)||\leq  \frac{M||f||}{\mathcal{R}(s)}$.

 Now we want to show that $r(s)$ is the Laplace transform of an analytic function defined on a sectorial region. It follows
from equation \eqref{thm2-eq41} that
 $$r(s)=\left( \int^{1}_{0}s^{\beta-1}\mu(d\beta)\right)q\left( \int^{1}_{0}s^{\beta}\mu(d\beta)\right)$$
for all $s>0$, but the right hand side here is well defined and analytic on the set of complex $s$ such that $\mathcal{R}
\left( \int^{1}_{0}s^{\beta}\mu(d\beta)\right)>0$. Let $\beta_{1}=\sup\{supp \hspace{0.1cm}\mu\}$ 
 and fix $\epsilon >0$  small such that
$\frac{\pi/2-\epsilon}{\beta_{1}}>\pi/2$. So if $1/2< \beta_{1}<1$, then $r(s)$ has a unique analytic extension to the sectorial
region $\C(\frac{\pi/2-\epsilon}{\beta_{1}})\subset \{s\in \mathbb{C}: \mathcal{R}( \int^{1}_{0}s^{\beta}\mu(d\beta))
>0\}$ and note that $\frac{\pi/2-\epsilon}{\beta_{1}}=\pi/2+\alpha$ for some $\alpha> 0$. If $0<\beta_{1}<1/2$ then $r(s)$
has an analytic extension to the sectorial region $s\in \C(\pi/2+\alpha)$ for any $\alpha<\pi/2$, and $\mathcal{R}
(\int^{1}_{0}s^{\beta}\mu(d\beta))>0$ for all such $s$. Now for any complex $s=re^{i\theta}$ such that $s\in
\C(\pi/2+\alpha^{'})$ for any $0<\alpha^{'}<\alpha$ we have that
\begin{eqnarray*}
 ||sr(s)|| &=& \left|\left|\left(\displaystyle \int^{1}_{0}s^{\beta}\mu(d\beta)\right) q \left(\displaystyle \int^{1}_{0}s^{\beta}\mu(d\beta)\right)\right|\right| \\
&\leq& \left|\displaystyle \frac{\displaystyle \int^{1}_{0}s^{\beta}\mu(d\beta)}{\mathcal{R}
\left(\displaystyle \int^{1}_{0}s^{\beta}\mu(d\beta\right)}\right|M||f|| \\ &=& \left|\frac{\int^{1}_{0}r^{\beta}\cos(\beta \theta)
\mu(d\beta) + i\int^{1}_{0}r^{\beta}\sin(\beta \theta)\mu(d\beta)}{\int^{1}_{0}r^{\beta}\cos(\beta \theta)\mu(d\beta)}
\right|M||f||
\\ &\leq& \left(1+\left|\frac{\displaystyle \int^{1}_{0}r^{\beta}\sin(\beta\theta)\mu(d\beta)}{\displaystyle \int^{1}_{0}r^{\beta}
\cos(\beta\theta)\mu(d\beta)}\right|\right)M||f|| \\ &\leq& \left(1+\displaystyle \frac{\displaystyle \int^{1}_{0}r^{\beta}
(d\beta)}{\cos(\pi/2-\epsilon)\displaystyle \int^{1}_{0}r^{\beta}\mu(d\beta)}\right)M||f||
 \\ &=& \left(1+\displaystyle
\frac{1}{\cos(\pi/2-\epsilon)}\right)M||f||<\infty.
\end{eqnarray*}
Hence theorem 2.6.1 of \cite{ABHN} implies  that there exists an analytic function $\overline{r}(t)$ on $t\in \C(\alpha)$ with Laplace
transform $r(s)$. Using the uniqueness of the Laplace transform it follows that $t\mapsto S(t)f$ has an analytic extension
$\overline{r}(t)$ to the sectorial region $t\in \C(\alpha)$.
\\
\\ As in  Theorem \ref{theorem1}  for any $\beta\in supp \mu$ the function
\begin{equation}\label{thm2-eq-35}
t\mapsto \int^{t}_{0}\frac{(t-u)^{-\beta}}{\Gamma(1-\beta)}S(u)fdu
\end{equation}
 has analytic extension to the sectorial region $t\in \C(\alpha)$.
Next we wish to apply Theorem 2.6.1 of \cite{ABHN} again to show that for any $0< \beta <1$ the function
\begin{equation}\label{thm2-eq-39}
t\mapsto \int^{t}_{0}\bigg(\int_0^1\frac{(t-u)^{-\beta}}{\Gamma(1-\beta)}\mu(d\beta)\bigg)S(u)fdu
\end{equation}
has analytic extension to the sectorial region $t\in \C(\alpha)$.
\\
\\ Since $\displaystyle \int^{\infty}_{0}\left(\displaystyle \int^{1}_{0}\displaystyle \frac{t^{-\beta}}{\Gamma(1-\beta)}\mu
(d\beta)\right)e^{-st}dt=\displaystyle \int^{1}_{0}s^{\beta-1}\mu(d\beta)$
\\
\\ for any $0<\beta<1$ and any $s>0$ and $r(s)$ is the Laplace transform of $t\mapsto S(t)f$ it follows from convolution property
of the Laplace transform that the function  \eqref{thm2-eq-39} has Laplace transform $s^{-1}\psi(s)r(s)$ for all $s>0$. Since $r(s)$ has an
analytic extension to the sectorial region $s\in \C(\pi/2+\alpha)$, so does $s^{-1}\psi(s)r(s)$. For any $x>0$, if
$s=x+re^{i\theta}$ for some $r>0$ and $|\theta|<\pi/2+\alpha^{'}$ for any $0<\alpha^{'}<\alpha$ then we have
\begin{eqnarray*}
 ||(s-x)\left(\int^{1}_{0}s^{\beta-1}\mu(d\beta)\right)r(s)|| &=& ||(s-x)\int^{1}_{0}s^{\beta-2}s.r(s)\mu(d\beta)|| \\ &\leq&
r||\int^{1}_{0}s^{\beta-2}\mu(d\beta)||(1+\frac{1}{\cos(\pi/2-\epsilon)})M||f|| \\ &\leq&r\left(\int^{1}_{0}||s^{\beta-2}||
\mu(d\beta)\right)\left(1+\frac{1}{\cos(\pi/2-\epsilon)}\right)M||f|| \\ &\leq& r\left(\int^{1}_{0}x^{\beta-2}\mu(d\beta)\right)
\left(1+\frac{1}{\cos(\pi/2-\epsilon)}\right)M||f|| \\ &=& \frac{r}{x^{2}}\left(\int^{1}_{0}x^{\beta}\mu(d\beta)\right)
\left(1+\frac{1}{\cos(\pi/2-\epsilon)}\right)M||f||.
\end{eqnarray*}
Since $\mu$ positive finite measure and $x>0$, so that $||(s-x)s^{-1}\psi(s)r(s)||$ is bounded on the region $x+\C(\alpha^{'}
+\pi/2)$ for all $0<\alpha^{'}<\alpha$. Then it follows as before that the function \eqref{thm2-eq-39} has an analytic extension to the
sectorial region $\C(\alpha)$.

Since $\{T(t):t\geq 0\}$ is a strongly continuous semigroup with generator $L$, Theorem 1.2.4 (b) in \cite{pazy} implies that
$\int^{t}_{0}T(l)fdl$ is in the domain of the operator $L$ and
$$T(t)f=L \int^{t}_{0}T(l)fdl +f.$$
Then by taking Laplace transform of both sides we have
$$\int^{\infty}_{0}e^{-st}T(t)fdt=s^{-1}L\int^{\infty}_{0}e^{-st}T(t)fdt +s^{-1}f$$
for all $s>0$. Multiply both sides by $s$ to obtain
$$s\int^{\infty}_{0}e^{-st}T(t)fdt=L\int^{\infty}_{0}e^{-st}T(t)fdt +f$$
and substitute $\psi(s)=\int^{1}_{0}s^{\beta}\mu(d\beta)$ for $s$ to obtain
$$\psi(s)\int^{\infty}_{0}e^{-\psi(s)t}T(t)fdt=L\int^{\infty}_{0}e^{-\psi(s)t}T(t)fdt +f$$
for all $s>0$. Now use \eqref{thm2-eq41} twice to get
$$s\int^{\infty}_{0}e^{-st}S(t)fdt=L\frac{s}{\psi(s)}\int^{\infty}_{0}e^{-st}S(t)fdt +f$$
and multiplying through by $s^{-2}\psi(s)$ to get
$$s^{-1}\psi(s)\int^{\infty}_{0}e^{-st}S(t)fdt=Ls^{-1}\int^{\infty}_{0}e^{-st}S(t)fdt +\psi(s)s^{-2}f$$
since $L$ is closed. Invert the Laplace transform to get
\begin{equation}
\begin{split}
&\int^{t}_{0}\left(\int^{1}_{0}\frac{(t-u)^{-\beta}}{\Gamma(1-\beta)}\mu(d\beta)\right)S(u)fdu-\int^{1}_{0}\frac{t^{1-\beta}}{\Gamma(2-\beta)}f\mu
(d\beta)\\
&=\lim_{n\rightarrow \infty} L\sum^{N_{n}}_{j=1}\alpha_{n,j}\frac{e^{c_{n_{j}}t}}{c_{n_{j}}}
\int^{\infty}_{0}e^{-C_{n_{j}}t}S(t)fdt
\end{split}
\end{equation}
where the constant $N_{n},\alpha_{n,j}$, and $c_{n}$ are given by the inversion formula.

Next using Fubini's theorem
we show that $\int^{1}_{0}\int^{t}_{0}\frac{(t-u)^{-\beta}}{\Gamma(1-\beta)}S(u)fdu\mu(d\beta)$ have
same Laplace transform $s^{-1}\psi(s)r(s).$ This is true because
\begin{equation}
\begin{split}
 &\int^{1}_{0}\int^{\infty}_{0}||e^{-st}\int^{t}_{0}\frac{(t-u)^{-\beta}}{\Gamma(1-\beta)}S(u)f||dudt\mu(d\beta)\\
&\leq M||f||\int^{1}_{0}\int^{\infty}_{0}e^{-st}\int^{t}_{0}\frac{(t-u)^{-\beta}}{\Gamma(1-\beta)}dudt\mu(d\beta)\\
&= M||f||\int^{1}_{0}\int^{\infty}_{0}\frac{e^{-st}t^{1-\beta}}{\Gamma(2-\beta)}dt\mu(d\beta)\\
&\leq M||f||\int^{1}_{0}s^{\beta-2}\mu(d\beta)<\infty.
\end{split}
\end{equation}
Since $\mu$ is positive finite measure and $S(t)f$ is uniformly bounded then using Fubini's theorem and the uniqueness of the Laplace transform for functions in $L_{loc}^1(\rd)$ (Theorem 1.7.3 in \cite{ABHN})
we have
\begin{equation}
\begin{split}
&\int^{1}_{0}\left[\int^{t}_{0}\frac{(t-u)^{-\beta}}{\Gamma(1-\beta)}S(u)fdu-\frac{t^{1-\beta}}{\Gamma(2-\beta)}f\right]\mu
(d\beta)\\
&=\lim_{n\rightarrow \infty} L\sum^{N_{n}}_{j=1}\alpha_{n,j}\frac{e^{c_{n_{j}}t}}{c_{n_{j}}}
\int^{\infty}_{0}e^{-C_{n_{j}}t}S(t)fdt.
\end{split}
\end{equation}
Using again the fact that $L$ is closed we get
$$\int^{1}_{0}\left[\int^{t}_{0}\frac{(t-u)^{-\beta}}{\Gamma(1-\beta)}S(u)fdu-\frac{t^{1-\beta}}{\Gamma(2-\beta)}f\right]\mu
(d\beta)=L\int^{t}_{0}S(u)fdu$$
and now take the derivative with respect to $t$ on both sides to obtain
 $$\int^{1}_{0}\left(\frac{d^{\beta}}{dt^{\beta}}S(t)f-\frac{t^{-\beta}}{\Gamma(1-\beta)}f\right)\mu
(d\beta)=LS(t)f$$
for all $t>0$, where we use the fact that $L$ is closed to justify taking the derivative inside.
\end{proof}
\begin{corollary}
Let $0<\gamma\leq 2$. Let $-(-\Delta)^{\gamma/2}$ be fractional Laplacian on $L^1(\R^d)$ corresponding to the semigroup $T(t)$ on $L^1(\R^d)$. Let $Y(t)$ be the corresponding symmetric stable process (i.e. $T(t)f(x)=\E_x(f(Y(t)))$ ).  Then the family $\{S(t):t\geq 0\}$ of linear operators from $X$ into $X$ given by $S(t)f=\displaystyle
\int^{\infty}_{0}(T(l)f)f_{E_\mu(t)}(l)dl=\E(f(Y(E_\mu(t))))$, is uniformly bounded and strongly analytic in a sectorial region. Furthermore,
$\{S(t):t\geq 0\}$ is strongly continuous and $g(x, t)=S(t)f(x)$ is a solution of \begin{equation}\label{distributed-frac-prob-frac-laplace}
\int^{1}_{0}\partial^{\beta}_{t}g(x,t)\mu(d\beta)= -(-\Delta)^{\gamma/2}g(x,t); \ g(x,0)=f(x).
\end{equation}

\end{corollary}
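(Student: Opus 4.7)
The plan is to obtain this statement as a direct application of the preceding distributed-order theorem to the specific operator $L=-(-\Delta)^{\gamma/2}$. The first step is to verify the standing hypothesis: $L$ must be the generator of a uniformly bounded, strongly continuous semigroup on $X=L^1(\R^d)$. Since $Y(t)$ is the symmetric $\gamma$-stable process, its transition operator $T(t)f(x)=\E_x[f(Y(t))]$ acts as convolution with the symmetric stable density $p_t$, so $\|T(t)f\|_1\leq \|p_t\|_1\|f\|_1=\|f\|_1$, giving uniform boundedness with $M=1$. Strong continuity on $L^1(\R^d)$ is the standard approximate identity statement (continuity of translation in $L^1$ combined with $p_t\to\delta_0$ as $t\to 0^+$). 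The generator of this convolution semigroup on $L^1$ is well known to be $-(-\Delta)^{\gamma/2}$.

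With the hypothesis verified, the preceding theorem applies verbatim and produces
\[
S(t)f=\int_0^\infty (T(l)f)\,f_{E_\mu(t)}(l)\,dl
\]
as a uniformly bounded, strongly continuous, strongly analytic (in a sectorial region) family of operators on $L^1(\R^d)$, and yields that $g(x,t)=S(t)f(x)$ solves \eqref{distributed-frac-prob-frac-laplace}. All the analytic content of the corollary is already packaged in the preceding theorem, so no further work in this direction is required.

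What remains is the probabilistic identification $S(t)f=\E[f(Y(E_\mu(t)))]$. The stable process $Y(t)$ is, by construction, independent of the inverse subordinator $E_\mu(t)$, and the latter has a Lebesgue density $f_{E_\mu(t)}$ because $\phi(0,\infty)=\infty$ in the distributed-order setting, as recorded in Section \ref{sec3}. Conditioning on $E_\mu(t)$ then gives
\[
\E[f(Y(E_\mu(t)))]=\int_0^\infty \E[f(Y(l))]\,f_{E_\mu(t)}(l)\,dl=\int_0^\infty (T(l)f)(x)\,f_{E_\mu(t)}(l)\,dl,
\]
where the exchange of expectation and Bochner integral is justified by the integrability estimate $\|T(l)f\|_1 f_{E_\mu(t)}(l)\leq\|f\|_1 f_{E_\mu(t)}(l)$, i.e., exactly the Bochner-theorem check already performed in the proof of Theorem \ref{theorem1}.

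I do not anticipate a genuine obstacle: the corollary is essentially a specialization of the general theorem. The only item that deserves an explicit check is the pair \emph{uniform boundedness} and \emph{strong continuity} of the $\gamma$-stable semigroup on $L^1(\R^d)$ (rather than on $C_0$ or $L^p$, $p>1$, where such statements are more commonly quoted), but both facts follow immediately from the convolution representation together with $\|p_t\|_1=1$.
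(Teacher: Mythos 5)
Your proposal is correct and matches the paper's intent exactly: the corollary is stated without proof as an immediate specialization of the preceding distributed-order theorem to $L=-(-\Delta)^{\gamma/2}$ on $L^1(\R^d)$, which is precisely what you carry out. Your explicit verification that the symmetric stable convolution semigroup is uniformly bounded and strongly continuous on $L^1$, and the conditioning argument identifying $S(t)f=\E[f(Y(E_\mu(t)))]$, simply supply the routine details the paper leaves implicit.
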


\begin{remark}
It looks like a challenging problem to extend the methods applied in the main results to more general  time operators defined as $\psi(\partial_t)-\phi(t,\infty)$ where $\psi$ is defined in \eqref{psiD2}. Meerschaert and Scheffler \cite{M-S-triangular} define this operator by its Laplace transform as
$$\int_0^\infty e^{-st}\psi(\partial_t)g(t)dt=\psi(s)\tilde{g}(s).$$
Likewise it looks like a challenging problem to extend the results in this paper to the case where $\psi(s)=(s+\lambda)^\beta-\lambda^\beta$ for $\lambda>0$. This  $\psi$ gives rise to the so called tempered fractional  derivative operator studied by \cite{temperedLM, m-n-v-pams}.
\end{remark}

\vspace{1cm}
\noindent {\bf Acknowledgements}. The authors are grateful to Mark M. Meerschaert  for suggesting the problems studied in this paper, and for fruitful discussion about the results in this paper.





\end{document}